\documentclass{amsart}
\usepackage[american]{babel}
\usepackage[T1]{fontenc}
\usepackage[utf8]{inputenc}
\usepackage{amsmath,amsfonts,amstext, dsfont}
\usepackage{amsthm,amssymb,amsmath,amscd,graphicx,epsfig,hhline,mathrsfs,latexsym,url}
\usepackage[neverdecrease]{paralist}
\usepackage{yfonts,dsfont}
\usepackage{tikz}
\usetikzlibrary{shapes.geometric, arrows}
\frenchspacing

\newcommand{\mc}{\mathcal}
\newcommand{\ms}{\mathscr}

\newcommand{\mf}{\mathbf}
\newcommand{\mb}{\mathbb}
\newcommand{\mr}{\mathrm}
\newcommand{\aveN}{\frac{1}{N}\sum_{n=1}^N}


\newtheorem{Thm}{Theorem}[section] 
\newtheorem*{Rem}{Remark}

\newtheorem{Le}[Thm]{Lemma}
\newtheorem{Cor}[Thm]{Corollary} 
\newtheorem{Prop}[Thm]{Proposition}
\theoremstyle{definition}
\newtheorem{Def}{Definition}[section]
\theoremstyle{plain}

\textwidth=125 mm
\textheight=195 mm

\newcommand{\lin}{\text{lin}}

\title{Pointwise entangled ergodic theorems for Dunford-Schwartz operators}

\author{D\'avid Kunszenti-Kov\'acs}
\address{MTA Alfréd Rényi Institute of Mathematics, P.O. Box 127, H-1364 Budapest, Hungary}
\email{daku@renyi.hu}

\keywords{Entangled ergodic averages, pointwise convergence, unimodular eigenvalues, polynomial ergodic averages} 
\subjclass[2010]{Primary: 47A35; Secondary: 37A30}
\thanks{The author has received funding from the European Research Council under the European Union's Seventh Framework Programme (FP7/2007-2013) / ERC grant agreement $\mr{n}^\circ$617747, and from the MTA R\'enyi Institute Lend\"ulet Limits of Structures Research Group.}

\begin{document}

\maketitle

\begin{abstract}

We investigate pointwise convergence of entangled ergodic averages of Dunford-Schwartz operators $T_0,T_1,\ldots, T_m$ on a Borel probability space. These averages take the form
\[
\frac{1}{N^k}\sum_{1\leq n_1,\ldots, n_k\leq N} T_m^{n_{\alpha(m)}}A_{m-1}T^{n_{\alpha(m-1)}}_{m-1}\ldots A_2T_2^{n_{\alpha(2)}}A_1T_1^{n_{\alpha(1)}} f,
\]
where $f\in L^p(X,\mu)$ for some $1\leq p<\infty$, and $\alpha:\left\{1,\ldots,m\right\}\to\left\{1,\ldots,k\right\}$ encodes the entanglement. We prove that under some joint boundedness and twisted compactness conditions on the pairs $(A_i,T_i)$, almost everywhere convergence holds for all $f\in L^p$. We also present an extension to polynomial powers in the case $p=2$, in addition to a continuous version concerning Dunford-Schwartz $C_0$-semigroups.
\end{abstract}


\section{Introduction}

Entangled ergodic averages were first introduced in a paper by Accardi, Hashimoto and Obata \cite{AHO}, where these were a key ingredient in providing an analogue of the Central Limit Theorem for the models in quantum probability they studied. Entangled ergodic averages take the general form
\[
\frac{1}{N^k}\sum_{1\leq n_1,\ldots, n_k\leq N} T_m^{n_{\alpha(m)}}A_{m-1}T^{n_{\alpha(m-1)}}_{m-1}\ldots A_2T_2^{n_{\alpha(2)}}A_1T_1^{n_{\alpha(1)}},
\]
where $A_i$ ($1\leq i\leq m-1$) and $T_i$ ($1\leq i\leq m$) are operators on a Banach space $E$, and $\alpha:\left\{1,\ldots,m\right\}\to\left\{1,\ldots,k\right\}$ is a surjective map. The operators $A_i$ are acting as transitions between the actions of the operators $T_i$,  that iteratively govern the dynamics, whereas the entanglement map $\alpha$ provides a coupling between the stages.

Further papers on the subject initially focused on strong convergence of these Cesàro averages, see Liebscher \cite{liebscher:1999}, Fidaleo \cite{fidaleo:2007,fidaleo:2010,fidaleo:2014} and Eisner, K.-K. \cite{EKK}.\\
In Eisner, K.-K. \cite{EKK2} and K.-K. \cite{KK}, attention was turned to pointwise almost everywhere convergence in the context of the $T_i$'s being operators on function spaces $E=L^p(X,\mu)$ ($1\leq p<\infty$), where $(X,\mu)$ is a standard probability space (i.e. a compact metrizable space with a Borel probability measure). The former paper concerns itself with the case $k=1$ with the $T_i$ being Dunford-Schwartz operators, whereas the latter allows for multi-parameter entanglement, but at the price of only dealing with Koopman operators.\\
In this paper we deal with the full case of general entanglement maps $\alpha$ and Dunford-Schwartz operators $T_i$, and show a.e. convergence on the whole $L^p$ space for all $1\leq p<\infty$, significantly improving on previous results. We introduce a formalism for the iterated function splittings used in the proofs in order to make them more concise, better highlighting what the main steps are, and where the different assumptions of the statements come into play. We also provide results concerning polynomial and time-continuous versions of the ergodic theorems considered.\\
Note that in what follows, $\mathbb{N}$ will be used to denote the set of positive integers.

Our main result is as follows.

\begin{Thm}\label{thm:main}
Let $m>1$ and $k$ be positive integers, $\alpha:\left\{1,\ldots,m\right\}\to\left\{1,\ldots,k\right\}$ a not necessarily surjective map, and let $T_1,T_2,\ldots T_m$ be Dunford-Schwartz operators 
 on a Borel probability space $(X,\mu)$.
Let $p\in[1,\infty)$, $E:=L^p(X,\mu)$ and let $E=E_{j;r}\oplus E_{j;s}$ be the Jacobs-Glicksberg-deLeeuw decomposition corresponding to $T_j$ $(1\leq j\leq m)$. Let further $A_j\in\mc{L}(E)$ $(1\leq j< m)$ be bounded operators. For a function $f\in E$ and an index $1\leq j\leq m-1$, write $\ms{A}_{j,f}:=\left\{A_jT_j^nf\left|\right.n\in\mb{N}\right\}$. Suppose that the following conditions hold:
\begin{itemize}
\item[(A1)](Twisted compactness)
For any function $f\in E$, index $1\leq j\leq m-1$ and $\varepsilon>0$, there exists a decomposition $E=\mc{U}\oplus \mc{R}$ with $0<\dim \mc{U}<\infty$ 
such that
\[
P_\mc{R}\ms{A}_{j,f}
\subset B_\varepsilon(0,L^\infty(X,\mu)),
\]
with $P_\mc{R}$ denoting the projection along $\mc{U}$ onto $\mc{R}$.
\item[(A2)](Joint $\mc{L}^\infty$-boundedness)
There exists a constant $C>0$ such that we have
\[\{A_jT^n_j|n\in\mb{N},1\leq j\leq m-1\}\subset B_C(0,\mc{L}(L^\infty(X,\mu)).
\]
\end{itemize}
Then we have the following:
\begin{enumerate}
\item for each $f\in E_{1;s}$, 
\[
\frac{1}{N^k}\sum_{1\leq n_1,\ldots, n_k\leq N} \left|T_m^{n_{\alpha(m)}}A_{m-1}T^{n_{\alpha(m-1)}}_{m-1}\ldots A_2T_2^{n_{\alpha(2)}}A_1T_1^{n_{\alpha(1)}} f \right|\rightarrow 0
\]
pointwise a.e.;
\item
for each $f\in E_{1;r}$, 
\[
\frac{1}{N^k}\sum_{1\leq n_1,\ldots, n_k\leq N} T_m^{n_{\alpha(m)}}A_{m-1}T^{n_{\alpha(m-1)}}_{m-1}\ldots A_2T_2^{n_{\alpha(2)}}A_1T_1^{n_{\alpha(1)}} f
\]
converges pointwise a.e..
\end{enumerate}
\end{Thm}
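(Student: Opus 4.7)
My approach would be induction on $m$, peeling the entangled product from the innermost layer outward via the twisted compactness (A1), with errors controlled uniformly in $L^\infty$ by (A2). A preliminary reduction to $f\in L^\infty$ (dense in $L^p$) uses a Banach-principle argument: the Hopf--Dunford--Schwartz maximal inequality for $T_1$ combined with the uniform $L^\infty$-bound (A2) on the tail of the product yields a maximal inequality for the entangled averages themselves. For the inductive step, fix $\varepsilon>0$ and apply (A1) to $(A_1,T_1)$ and $f$ to obtain
\[
A_1 T_1^n f \;=\; \sum_{l=1}^{d_1} c_l(n)\, v_l \;+\; r_n,\qquad \|r_n\|_\infty<\varepsilon,
\]
with $v_1,\dots,v_{d_1}$ a basis for a finite-dimensional $\mathcal{U}_1$ and $c_l(n)=\ell_l(A_1 T_1^n f)$ bounded scalars. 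Inserting this into the full average, the $r_n$-contribution is bounded in $L^\infty$ by $C^{m-2}\varepsilon$ thanks to (A2), so it suffices to analyze a finite sum of scalar-weighted entangled $(m-1)$-averages
\[
\frac{1}{N^k}\sum_{n_1,\dots,n_k} c_l(n_{\alpha(1)})\, T_m^{n_{\alpha(m)}}A_{m-1}\cdots A_2 T_2^{n_{\alpha(2)}}\, v_l.
\]

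For $f\in E_{1;r}$, I would approximate $f$ by unimodular $T_1$-eigenfunctions $f_\lambda$ with $T_1 f_\lambda=\lambda f_\lambda$, so that the innermost layer collapses to a pure character $\lambda^{n_{\alpha(1)}}$ times the fixed function $A_1 f_\lambda$. If $\alpha(1)$ does not appear elsewhere in the entanglement pattern, the $n_{\alpha(1)}$-sum factors off as a Fej\'er average; if $\alpha(1)=\alpha(j)$ for some $j\ge 2$, I would absorb the scalar into $T_j$ by replacing it with $\lambda T_j$, which is again a Dunford--Schwartz operator inheriting (A1)--(A2), reducing matters to an entangled $(m-1)$-average handled by the inductive hypothesis. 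For $f\in E_{1;s}$, I would use that $E_{1;s}$ coincides with the stable part of $\mu T_1$ for every unimodular $\mu$ (since unimodular scaling only permutes the peripheral point spectrum), so $\frac{1}{N}\sum_n\mu^n T_1^n f\to 0$ strongly for every $\mu$ on the unit circle; the coefficients $c_l(\cdot)$ inherit this weak-type vanishing, and combined with the inductive conclusion on each fixed $v_l$ this forces the main term to vanish a.e.\ in absolute-value average.

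The principal obstacle is the genuinely entangled case $\alpha(1)\in\{\alpha(2),\dots,\alpha(m)\}$, where $c_l(n_{\alpha(1)})$ is tied into the operator dynamics and the sum does not factor. Closing the induction here requires both verifying that (A1)--(A2) are preserved under the scalar-twist $T_j\mapsto\lambda T_j$ and, in the stable case, quantifying the joint decay of the $c_l$-weights against the entangled $(m-1)$-average uniformly in the remaining indices. The ``iterated function splittings'' formalism promised in the introduction is presumably the bookkeeping device that organizes these successive finite-dimensional reductions cleanly as the shadows propagate outward through all $m$ layers.
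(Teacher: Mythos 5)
Your opening move --- applying (A1) to $(A_1,T_1)$ to write $A_1T_1^nf=\sum_l c_l(n)v_l+r_n$ and killing the remainder in $L^\infty$ via (A2) --- is exactly how the paper starts. But two of your reductions contain genuine gaps. First, the preliminary passage to $f\in L^\infty$ (and likewise the approximation of $f\in E_{1;r}$ by finite combinations of eigenfunctions) rests on a maximal inequality for the \emph{entangled} averages that you do not have: the Hopf--Dunford--Schwartz inequality controls $\sup_N\frac1N\sum_n|T_1^nf|$ for a single operator, but the intermediate operators $A_j$ are only $L^p$-bounded and do not commute with pointwise suprema, and (A2) only gives uniform $L^\infty$-bounds, not the weak-type $L^p$ estimate a Banach-principle argument needs. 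The paper deliberately avoids any such maximal inequality: it applies (A1) to the \emph{actual} function $f$ (and then iterates the splitting through all $m-1$ levels, not just the first), so that for every $\varepsilon$ the limsup is $<\varepsilon$ off a set of measure $\varepsilon$; the only place a maximal-type ingredient enters is at the very last level, where the $L^1$-small errors $f_v-\widetilde f_v$ are fed into the pointwise ergodic theorem for the single positive operator $|T_m|$.

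Second, your induction does not close in the entangled case, as you yourself suspect. The character-absorption trick $T_j\mapsto\lambda T_j$ works only for exact eigenfunctions, and after one splitting the weights $c_l(n_{\alpha(1)})$ for a general $f\in E_{1;r}$ are not characters. The paper's resolution is to iterate the splitting down to $\mathcal{I}_{m-1}$, so that every intermediate layer is converted into a scalar sequence $\lambda_{x;n}=\varphi_{\overline x;x^*}(A_{l(x)}T_{l(x)}^nf_{\overline x})$, i.e.\ a \emph{linear sequence}: stable linear sequences lie in $\mc{N}$ (handling part (1)), while reversible ones lie in the Bohr almost periodic class $\ms{P}$, which is closed under multiplication and consists of good weights for the pointwise ergodic theorem for Dunford--Schwartz operators (\c{C}\"omez--Lin--Olsen). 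The entanglement is then resolved at the end by grouping the product $\prod_x\lambda_{x;n_{\alpha(l(x))}}$ according to which index $n_j$ each factor depends on: the factors attached to $n_{\alpha(m)}$ form a single good weight for $T_m$, and the rest form convergent scalar Ces\`aro averages. These ingredients --- the iterated (rather than one-step) splitting, the splitting of each basis vector into its reversible and stable parts with respect to the \emph{next} operator so that part (1) can absorb the stable pieces, and the good-weight theorem for almost periodic sequences --- are what your proposal is missing to quantify "the joint decay of the $c_l$-weights against the entangled $(m-1)$-average."
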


\begin{Rem}
Note that it was proven in \cite{EKK2} that the Volterra operator $V$ on $L^2([0,1])$ defined through
\[
(Vf)(x):=\int_{0}^x f(z) \mr{d}z
\]
as well as all of its powers can be decomposed into a finite sum of operators, each of which satisfy conditions (A1) and (A2) when paired with any Dunford-Schwartz operator. Hence the conclusions of Theorem \ref{thm:main} apply whenever the operators $A_i$ are chosen to be powers of $V$.
\end{Rem}


\section{Notations and tools}\label{sec:prelim}

Before proceeding to the proof of our main result, we need to clarify some of the notions used, and introduce notations that will simplify our arguments.

In what follows, $\mb{N}$ will denote the set of positive integers, and $\mb{T}$ the unit circle in $\mb{C}$.

The proof works by iteratively splitting the functions into finitely many parts, and so introducing vector indices will be very helpful. Given a vector $v\in\mb{N}^c$ ($c\geq 1$), let $\overline v\in\mb{N}^{c-1}$ be the vector obtained by deleting its last coordinate, and let $v^*$ denote its last coordinate. Also, we shall write $l(v):=c$ to denote the number of coordinates of the vector, $x\subset v$ if there exist vectors $w_0,w_1,\ldots,w_b$ ($b\geq 1$) such that $w_0=x$, $w_b=v$ and for each $1\leq i\leq b$ we have $w_{i-1}=\overline{w_i}$, and finally $x\subseteq v$ if $x=v$ or $x\subset v$.

Let $\mc{N}$ denote the set of all bounded sequences $\{a_n\}\subset \ell^\infty(\mb{C})$ satisfying
$$
\lim_{N\to\infty}\aveN |a_n|=0.
$$
By the Koopman-von Neumann lemma, see e.g.~Petersen \cite[p. 65]{petersen:1983}, $(a_n)\in\mc{N}$ if and only if it lies in $\ell^\infty$ and converges to $0$ along a sequence of density $1$.\\

\begin{Def}
Given a Banach space $E$ and an operator $T\in\mc{L}(E)$, the operator $T$ is said to have \emph{relatively weakly compact orbits} if for each $f\in E$ the orbit set $\left\{ T^nf|n\in\mb{N}^+\right\}$ is relatively weakly closed in $E$. For any such operator, there exists a corresponding \emph{Jacobs-Glicksberg-deLeeuw} decomposition of the form (cf. \cite[Theorem II.4.8]{eisner-book})
$$E=E_r \oplus E_s,$$
where
\begin{eqnarray*}
E_r&:=&\overline\lin\{f\in E:\ Tf=\lambda f\mbox{ for some } \lambda\in\mb{T}\},\\
E_s&:=&\{f\in E:\ (\varphi( T^nf))\in\mc{N} \mbox{ for every } \varphi\in E'\}.
\end{eqnarray*}
\end{Def}

Note that every power bounded operator on a reflexive Banach space has relatively weakly compact orbits. Thus the above decomposition is valid for, e.g., every contraction on $L^p(X,\mu)$ for $p\in (1,\infty)$.
In addition, if $T$ is a \emph{Dunford-Schwartz operator} on $L^1(X,\mu)$, i.e., $\|T\|_1\leq 1$ and $T$ is also a contraction on $L^\infty(X,\mu)$,
then $T$ has relatively weakly compact orbits as well, see Lin, Olsen, Tempelman \cite[Prop.~2.6]{LOT} and Kornfeld, Lin \cite[pp.~226--227]{KL}. 
Note that every Dunford-Schwartz operator is also a contraction on  $L^p(X,\mu)$ for every $p\in(1,\infty)$, see, e.g., \cite[Theorem 8.23]{EFHN}.
The Jacobs-deLeeuw-Glicksberg decomposition is therefore valid for Dunford-Schwartz operators on $L^p(X,\mu)$ for every $p\in[1,\infty)$.

Let  $T$ be a Dunford-Schwartz operator on $(X,\mu)$. The \emph{(linear) modulus} $|T|$ of $T$ is defined as the unique positive operator on $L^1(X,\mu)$ having  the same $L^1$- and $L^\infty$-norm as $T$ such that $|T^nf|\leq |T|^n |f|$ holds a.e. for every $f\in L^1(X,\mu)$ and every $n\in\mb{N}$. The modulus of a Dunford-Schwartz operator is again a Dunford-Schwartz operator. For details, we refer to Dunford, Schwartz \cite[p.~672]{DS-book} and Krengel \cite[pp.~159--160]{K-book}. 
Also, it is easily seen that for $T$ Dunford-Schwartz, the operators $\lambda T$ ($\lambda\in\mb{T}$) are themselves Dunford-Schwartz and have the same modulus.

For example, every Koopman operator (i.e., an operator induced by a $\mu$-preserving transformation on $X$) is a positive Dunford-Schwartz operator, hence coincides with its modulus.

A key property of Dunford-Schwartz operators needed for the present paper is that the validity of pointwise ergodic theorems typically extends from Koopman operators to Dunford-Schwartz operators. 

For instance, for every $f\in L^1(X,\mu)$ the ergodic averages 
\begin{equation}\label{eq:pet}
\aveN T^nf
\end{equation}
converge a.e.~as $N\to\infty$, see Dunford, Schwartz \cite[p.~675]{DS-book}.

\smallskip

We shall also need to define some classes of sequences that act as good weights for pointwise ergodic theorems.

A sequence $(a_n)_{n\in\mb{N}}\subset\mb{C}$ is called a \emph{trigonometric polynomial} (cf. \cite{JLO}) if it is of the form $a_n=\sum_{j=1}^t b_j\rho_j^n$ where the $b_j$ are complex numbers, and $\rho_j\in\mb{T}$ for all $1\leq j\leq t$.\\
Let $\ms{P}\subset \ell^\infty$ denote the set of Bohr almost periodic sequences, i.e., the set of uniform limits of trigonometric polynomials.

The following properties of the set $\ms{P}$ will be used: It is closed in $l^\infty$, closed under multiplication, and
is a subclass of  (Weyl) almost periodic sequences $AP(\mb{N})$, i.e., sequences whose orbit under the left shift is relatively compact in $l^\infty$. Actually, $AP(\mb{N})=\ms{P}\oplus c_0$,  see Bellow, Losert \cite[p. 316]{BL}, corresponding to the Jacobs-deLeeuw-Glicksberg decomposition of $AP(\mb{N})$ induced by the left shift, see, e.g.,~\cite[Theorem I.1.20]{eisner-book}. 

By {\c{C}}{\"o}mez,  Lin, Olsen \cite[Theorem 2.5]{CLO}, every element $(a_n)_{n=1}^\infty$ of $AP(\mb{N})$, and hence of  $\ms{P}$, is a good weight for the pointwise ergodic theorem for Dunford-Schwartz operators. That is, for every Dunford-Schwartz operator $T$ on a probability space $(X,\mu)$ and every $f\in L^1(X,\mu)$, the weighted ergodic averages
$$
\aveN a_n T^nf
$$
converge almost everywhere as $N\to\infty$.

A sequence $(a_n)_{n\in\mb{N}}\subset\mb{C}$ is called \emph{linear} (cf. \cite{E}), if there exist a Banach space $E$, an operator $T\in \mc{L}(E)$ with relatively weakly compact orbits and $y\in E$, $y'\in E'$ such that $a_n=y'(T^ny)$ for all $n\in\mb{N}$. Let us call a linear sequence \emph{stable} if we can choose $y\in E_s$, and \emph{reversible} if we can chose $y\in E_r$. It is easy to see that stable linear sequences all lie in $\mc{N}$, whereas reversible linear sequences all lie in $\ms{P}$.

We shall later also need properties of polynomial subsequences of linear sequences, and thus a corresponding class of good weights for the pointwise polynomial ergodic theorem.

\begin{Def}
Given $1\leq p<\infty$ and a subsequence $(n_s)_{s\in\mb{N}}$ of $\mb{N}$, the class $B_{p,(n_s)_{s\in\mb{N}}}$ of $p,(n_s)_{s\in\mb{N}}$-Besicovitch sequences is the closure of the trigonometric polynomials in the ${p,(n_s)_{s\in\mb{N}}}$
semi-norm defined by
\[
\|(a_n)_{n\in\mb{N}}\|_{p,(n_s)_{s\in\mb{N}}}^p=\limsup_{N\to\infty} \frac{1}{N} \sum_{n=1}^N |a_{n_s}|^p.
\]
\end{Def}

By \cite[Thm. 2.1]{LO}, the set of bounded sequences in these classes is independent of the choice of $p$, i.e.,  $B_{1,(n_s)_{s\in\mb{N}}}\cap l^\infty=B_{p,(n_s)_{s\in\mb{N}}}\cap l^\infty $ for all $p\in(1,\infty)$. Note that the seminorm defined above is trivially dominated by the $l^\infty$ norm, and hence $\ms{P}\subset B_{1,(n_s)_{s\in\mb{N}}}\cap l^\infty$ for any subsequence $(n_s)_{s\in\mb{N}}$ of $\mb{N}$. The closedness of $\ms{P}$ under multiplication thus yields the following lemma.

\begin{Le}\label{Le:Besicovitch}
Let $(a_{n;j})_{n\in\mb{N}}$ be a reversible linear sequence for each $1\leq j\leq t$. Then $(b_n)_{n\in\mb{N}}$ defined by $b_n:=\prod_{j=1}^t a_{n;j}$ lies in $B_{1,(n_s)_{s\in\mb{N}}}\cap l^\infty$ for any subsequence $(n_s)_{s\in\mb{N}}$ of $\mb{N}$.
\end{Le}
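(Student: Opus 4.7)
The plan is to string together three facts already assembled in Section~\ref{sec:prelim}, so the argument should be essentially one paragraph of bookkeeping rather than a substantive proof.

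First, I would recall that every reversible linear sequence lies in $\ms{P}$; this is asserted in the text immediately after the definition of reversible/stable linear sequences. Hence each factor $(a_{n;j})_{n\in\mb{N}}$ belongs to $\ms{P}$.

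Next, I would invoke the fact that $\ms{P}$ is closed under pointwise multiplication (stated in the paragraph introducing $\ms{P}$, with the caveat that one should note that multiplication is continuous on bounded sets of $\ell^\infty$, so the uniform limit of products of trigonometric polynomials is again a uniform limit of trigonometric polynomials — one can check this by expanding $(\sum b_j\rho_j^n)(\sum b'_k{\rho'_k}^n)=\sum b_j b'_k(\rho_j\rho'_k)^n$, confirming that products of trigonometric polynomials are trigonometric polynomials). An easy induction on $t$ then shows that $(b_n)_{n\in\mb{N}}=\prod_{j=1}^t (a_{n;j})_{n\in\mb{N}}\in\ms{P}$.

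Finally, since any sequence in $\ms{P}$ is bounded and the $\|\cdot\|_{p,(n_s)_{s\in\mb{N}}}$-seminorm is trivially dominated by the $\ell^\infty$-norm (as noted just before the lemma), the inclusion $\ms{P}\subset B_{1,(n_s)_{s\in\mb{N}}}\cap\ell^\infty$ is automatic for every subsequence $(n_s)_{s\in\mb{N}}$, which gives $(b_n)_{n\in\mb{N}}\in B_{1,(n_s)_{s\in\mb{N}}}\cap\ell^\infty$, as desired. There is no real obstacle here; the only mild point is verifying closedness of $\ms{P}$ under multiplication, which follows from the explicit product formula for trigonometric polynomials together with boundedness of the factors to pass to uniform limits.
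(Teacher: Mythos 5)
Your proof is correct and follows exactly the route the paper intends: reversible linear sequences lie in $\ms{P}$, $\ms{P}$ is closed under multiplication (via the product formula for trigonometric polynomials and passage to uniform limits), and $\ms{P}\subset B_{1,(n_s)_{s\in\mb{N}}}\cap \ell^\infty$ since the seminorm is dominated by the $\ell^\infty$-norm. The paper gives no separate proof precisely because it derives the lemma from these same three observations in the preceding paragraphs.
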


The essential property of elements of $B_{1,(n_s)_{s\in\mb{N}}}\cap l^\infty$ is given by the following theorem.

\begin{Thm}\label{Thm:Besicovitch}(cf. \cite[Theorem 2.1]{JLO}) Let $T$ be a Dunford-Schwartz operator on a standard probability space $(X,\mu)$, $1\leq p<\infty$, and $\mf{q}(x)$ a polynomial with integer coefficients taking positive values on $\mb{N}$. Then for any $f\in L^p(X,\mu)$ and $(b_n)_{n\in\mb{N}}\in B_{1,(\mf{q}(n))_{n\in\mb{N}}}\cap l^\infty$ the limit
\[
\lim_{N\to\infty}\frac{1}{N} \sum_{n=1}^N b_{\mf{q}(n)}T^{\mf{q}(n)}f
\]
exists almost surely.
\end{Thm}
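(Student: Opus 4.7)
The strategy is the classical two-step argument for weighted pointwise ergodic theorems: first verify the conclusion on the dense subclass of trigonometric polynomial weights, and then extend to all of $B_{1,(\mf{q}(n))}\cap\ell^\infty$ by means of a maximal inequality.

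For the first step, let $b_n = \sum_{j=1}^t c_j \rho_j^n$ with $\rho_j\in\mb{T}$ be a trigonometric polynomial. The crucial observation, already recorded in the preliminaries, is that each $\rho_j T$ is itself a Dunford-Schwartz operator, so
\[
\frac{1}{N}\sum_{n=1}^N b_{\mf{q}(n)} T^{\mf{q}(n)} f = \sum_{j=1}^t c_j\cdot \frac{1}{N}\sum_{n=1}^N (\rho_j T)^{\mf{q}(n)} f.
\]
The a.e.\ convergence of each of the $t$ inner polynomial averages is then precisely the pointwise polynomial ergodic theorem for Dunford-Schwartz operators (Bourgain's theorem on $L^2$ for Koopman operators, subsequently extended to $L^p$ and to the Dunford-Schwartz setting by Wierdl, Nair and others, typically via Akcoglu/Rota dilation or transference from the Koopman case). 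Thus a.e.\ convergence holds for every trigonometric polynomial weight.

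For a general $b\in B_{1,(\mf{q}(n))}\cap\ell^\infty$, I would approximate $b$ in the Besicovitch seminorm by trigonometric polynomials $b^{(k)}$; using the identity $B_{1,(\mf{q}(n))}\cap\ell^\infty = B_{p,(\mf{q}(n))}\cap\ell^\infty$ (cited from \cite{LO}), one may arrange that the $b^{(k)}$ are uniformly bounded and that $r^{(k)} := b - b^{(k)}$ also lies in $\ell^\infty$ with $\|r^{(k)}\|_{p,(\mf{q}(n))}\to 0$. Combining the polynomial maximal ergodic inequality for the positive Dunford-Schwartz operator $|T|$ with the pointwise bound $|T^{\mf{q}(n)} f| \leq |T|^{\mf{q}(n)} |f|$ and a H\"older estimate (together with a dyadic-block decomposition in $N$), one produces a weighted maximal inequality of the form
\[
\Big\|\sup_N \Big| \frac{1}{N}\sum_{n=1}^N r^{(k)}_{\mf{q}(n)} T^{\mf{q}(n)} f\Big|\Big\|_{p,\infty} \leq C\,\|r^{(k)}\|_{p,(\mf{q}(n))}\,\|f\|_p.
\]
Since the right-hand side tends to $0$ as $k\to\infty$ while the first step supplies a.e.\ convergence for each $b^{(k)}$, a standard Cauchy-in-measure argument (splitting the difference $S_N(b)f - S_M(b)f$ into the trigonometric-polynomial part plus two maximal-error terms and using Borel-Cantelli) then forces the full averages to converge a.e.

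\textbf{Main obstacle.} The deep analytic ingredient is the pointwise polynomial ergodic theorem together with its associated maximal inequality for Dunford-Schwartz operators, invoked in both steps: Bourgain's original proof covers only Koopman operators on $L^2$, and passage to general Dunford-Schwartz operators on $L^p$ requires dilation arguments or the finer harmonic-analytic estimates developed by Wierdl and Jones-Olsen-Wierdl. Once those inputs are in hand, the remainder of the argument is essentially a density/approximation bookkeeping.
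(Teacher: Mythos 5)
The paper offers no proof of this theorem: it is imported (as a special case) from Jones-Lin-Olsen \cite[Theorem 2.1]{JLO}, and your outline --- a.e.\ convergence for trigonometric polynomial weights via the identity $\rho^{\mf{q}(n)}T^{\mf{q}(n)}f=(\rho T)^{\mf{q}(n)}f$ together with the polynomial pointwise ergodic theorem for the Dunford-Schwartz operators $\rho_j T$, followed by a density/maximal-inequality extension to all of $B_{1,(\mf{q}(n))_{n\in\mb{N}}}\cap\ell^\infty$ --- is exactly the argument of that reference, with the deep external input correctly isolated. The one point worth flagging is that this input (Bourgain, Wierdl, Nair, Jones-Olsen, plus the passage from Koopman to Dunford-Schwartz operators) is available only for $p>1$ once $\deg\mf{q}\geq 2$: by Buczolich-Mauldin the pointwise ergodic theorem along $\mf{q}(n)=n^2$ already fails in $L^1$ for Koopman operators, so neither your argument nor the cited theorem actually covers the endpoint $p=1$ for nonlinear $\mf{q}$ as literally stated --- harmlessly for this paper, which only invokes the result on $L^2$ in Theorem \ref{thm:main_poly}.
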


Finally, we need information about the sequences $(\lambda_{*;n})$ along polynomial indices. Recall that an operator is almost weakly stable if the stable part of the Jacobs-Glicksberg-deLeeuw decomposition is the whole space.

\begin{Prop}[cf. \cite{KK_AWPS} Thm. 1.1]\label{prop:uaws_pol}
Let $T$ be an almost weakly stable contraction on a Hilbert space $H$. Then $T$ is almost weakly polynomial stable, i.e., for any $h\in H$ and non-constant polynomial $\mf{q}$ with integer coefficients taking positive values on $\mb{N}$, the sequence $\{T^{\mf{q}(j)}h\}_{j=1}^\infty$ is almost weakly stable.
\end{Prop}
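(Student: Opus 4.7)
The plan is to pass to a unitary dilation of $T$ and then combine the spectral theorem with Weyl's equidistribution theorem for polynomial sequences. Let $U\in\mc{L}(K)$ be the Sz.-Nagy unitary dilation of $T$ on some Hilbert space $K\supseteq H$, so that $\langle T^n h,g\rangle_H=\langle U^n h,g\rangle_K$ for all $h,g\in H$ and $n\in\mb{N}$. The spectral theorem for $U$ associates to each pair $h,g\in H$ a finite complex Borel measure $\mu_{h,g}$ on $\mb{T}$ with $\langle T^n h,g\rangle=\hat\mu_{h,g}(n):=\int_{\mb{T}} z^n\,\mr{d}\mu_{h,g}(z)$.

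The next step is to translate the almost weak stability of $T$ into a structural property of the $\mu_{h,g}$. The assumption $H_s=H$ means $(\hat\mu_{h,g}(n))_n\in\mc{N}$ for all $h,g\in H$. Since $|\hat\mu_{h,g}(n)|\leq\|h\|\,\|g\|$, this is equivalent to the Cesàro mean of $|\hat\mu_{h,g}(n)|^2$ going to zero. Wiener's theorem
\[
\frac{1}{N}\sum_{n=1}^N |\hat\mu_{h,g}(n)|^2 \xrightarrow[N\to\infty]{} \sum_{\lambda\in\mb{T}} |\mu_{h,g}(\{\lambda\})|^2
\]
then forces $\mu_{h,g}$ to be a continuous (atom-free) measure on $\mb{T}$ for every $h,g\in H$.

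Now fix $h,g\in H$, write $\mu:=\mu_{h,g}$, and compute
\[
\frac{1}{N}\sum_{j=1}^N |\hat\mu(\mk{q}(j))|^2=\int_{\mb{T}^2}\frac{1}{N}\sum_{j=1}^N (z\bar w)^{\mk{q}(j)}\,\mr{d}(\mu\otimes\bar\mu)(z,w).
\]
By Weyl's equidistribution theorem for polynomial sequences, $\frac{1}{N}\sum_{j=1}^N \lambda^{\mk{q}(j)}\to 0$ whenever $\lambda\in\mb{T}$ is not a root of unity (since $\mk{q}$ is non-constant with integer coefficients, for $\lambda=e^{2\pi\mr{i}\theta}$ with $\theta$ irrational the leading term of $\mk{q}(x)\theta$ is irrational). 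The exceptional set $\{(z,w)\in\mb{T}^2: z\bar w \text{ is a root of unity}\}$ is a countable union of curves $\{w=\zeta z\}$ with $\zeta$ a root of unity; each such curve has $|\mu|\otimes|\mu|$-measure zero by Fubini and atom-freeness of $|\mu|$, so the whole set is $(\mu\otimes\bar\mu)$-null. Dominated convergence (the inner averages are uniformly bounded by $1$) then yields $\frac{1}{N}\sum_{j=1}^N |\hat\mu(\mk{q}(j))|^2\to 0$, and Cauchy--Schwarz gives $\frac{1}{N}\sum_{j=1}^N |\langle T^{\mk{q}(j)}h,g\rangle|\to 0$ for every $g\in H$, establishing the almost weak stability of $\{T^{\mk{q}(j)}h\}_{j=1}^\infty$.

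The main technical point is showing that the exceptional set on which the polynomial ergodic average fails to vanish has measure zero: this is exactly where continuity of the spectral measure -- and hence the almost weak stability hypothesis on $T$ -- is consumed. Everything else is a standard combination of Sz.-Nagy dilation, Wiener's theorem, Weyl polynomial equidistribution, and Fubini/dominated convergence.
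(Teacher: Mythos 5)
The paper does not actually prove this proposition --- it is imported from \cite{KK_AWPS} as a citation --- so there is no internal proof to compare against; your argument therefore has to stand on its own, and it does. The chain Sz.-Nagy unitary dilation $\to$ spectral measures $\mu_{h,g}$ $\to$ continuity of $\mu_{h,g}$ via the one-sided Wiener lemma together with Koopman--von Neumann $\to$ Weyl equidistribution killing $\frac{1}{N}\sum_{j\leq N}(z\bar w)^{\mathfrak{q}(j)}$ off the set where $z\bar w$ is a root of unity $\to$ Fubini plus dominated convergence, is correct at every step and is the standard route to this statement for Hilbert space contractions. Two points are worth making explicit if you write this up: (i) the one-sided Wiener lemma you quote is just the case $\mathfrak{q}(j)=j$ of the double-integral computation you perform two lines later, so the two steps can be merged and Wiener need not be invoked as a black box; (ii) the Fubini step needs atom-freeness of the total variation $|\mu_{h,g}|$, which does follow from atom-freeness of $\mu_{h,g}$ since $|\mu|(\{z\})=|\mu(\{z\})|$ for a singleton. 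Neither is a gap. Note also that the conclusion you reach --- $\frac{1}{N}\sum_{j=1}^N|\langle T^{\mathfrak{q}(j)}h,g\rangle|\to 0$ for every $g\in H$ --- is precisely the form in which the proposition is consumed later (Corollary \ref{cor:awps}), so your proof delivers what the paper needs.
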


As a consequence we obtain the following result.

\begin{Cor}\label{cor:awps}
Let $T$ be a Dunford-Schwartz operator on the standard probability space $(X,\mu)$, $\mf{q}$ a non-constant polynomial with integer coefficients taking positive values on $\mb{N}$ and $A$ an arbitrary operator on $L^2(X,\mu)$.
Then for any $g,\varphi\in L^2(X,\mu)$ with $g$ in the stable part of $L^2(X,\mu)$ with respect to $T$, we have that the sequence $\langle AT^{\mf{q}(n)}g,\varphi\rangle$ is bounded and lies in $\mc{N}$.
\end{Cor}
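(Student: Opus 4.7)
The plan is to reduce the problem to the case of an almost weakly stable Hilbert space contraction and then invoke Proposition~\ref{prop:uaws_pol}. Boundedness is immediate from $T$ being Dunford--Schwartz, and hence a contraction on $L^2(X,\mu)$:
\[
|\langle AT^{\mf{q}(n)}g,\varphi\rangle|\le \|A\|\,\|T^{\mf{q}(n)}g\|_2\,\|\varphi\|_2 \le \|A\|\,\|g\|_2\,\|\varphi\|_2.
\]

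The crucial preparatory step is to observe that, since we are on a Hilbert space, the Jacobs--Glicksberg--deLeeuw decomposition $L^2(X,\mu)=E_r\oplus E_s$ for $T$ is in fact \emph{orthogonal}. Indeed, if $Tv=\lambda v$ with $|\lambda|=1$, then the equality $\|v\|=\|Tv\|$ combined with the contractivity of $T^{*}$ forces $T^{*}Tv=v$, hence $T^{*}v=\bar\lambda v$. Thus for any $g\in E_s$,
\[
|\langle T^n g,v\rangle|=|\langle g,(T^{*})^n v\rangle|=|\langle g,v\rangle|,
\]
and the defining property $(\langle T^n g,v\rangle)\in\mc{N}$ of $E_s$ forces $\langle g,v\rangle=0$. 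Taking closed linear spans over unimodular eigenvectors yields $E_s\perp E_r$.

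With this in hand, I would work inside the closed $T$-invariant Hilbert subspace $E_s$. The restriction $T|_{E_s}$ is a contraction whose reversible part is trivial: any unimodular eigenvector of $T|_{E_s}$ would belong to $E_r\cap E_s=\{0\}$, so $T|_{E_s}$ is almost weakly stable on $E_s$. Applying Proposition~\ref{prop:uaws_pol} to $T|_{E_s}$ with the vector $g$ then gives that $\{T^{\mf{q}(n)}g\}\subset E_s$ is almost weakly stable, i.e., $\langle T^{\mf{q}(n)}g,\psi\rangle\in\mc{N}$ for every $\psi\in E_s$. To conclude, for arbitrary $\varphi\in L^2(X,\mu)$ I decompose $A^{*}\varphi=(A^{*}\varphi)_r+(A^{*}\varphi)_s$ orthogonally; since $T^{\mf{q}(n)}g\in E_s$ is orthogonal to $E_r$,
\[
\langle AT^{\mf{q}(n)}g,\varphi\rangle=\langle T^{\mf{q}(n)}g,A^{*}\varphi\rangle=\langle T^{\mf{q}(n)}g,(A^{*}\varphi)_s\rangle\in\mc{N}.
\]

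The main delicate point I expect is establishing the orthogonality of the JdLG decomposition carefully; once that is in place, the remainder is a straightforward bundling of the $T$-invariance of $E_s$ with the cited proposition.
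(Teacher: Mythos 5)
Your proof is correct and follows the route the paper intends (the corollary is stated as an immediate consequence of Proposition~\ref{prop:uaws_pol} with no written details): restrict $T$ to the closed invariant subspace $E_s$, where it is almost weakly stable since $E_r\cap E_s=\{0\}$, apply the proposition to conclude $(\psi(T^{\mf{q}(n)}g))\in\mc{N}$ for every functional $\psi$ on $E_s$, and move $A$ across the pairing. The orthogonality of the Jacobs--Glicksberg--deLeeuw splitting, though correctly established, is not actually needed: in the final step one may simply restrict the functional $\langle\,\cdot\,,A^{*}\varphi\rangle$ to $E_s$ rather than decompose $A^{*}\varphi$.
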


%
%

\section{Proof of Theorem \ref{thm:main}}\label{sec:general-case}

We shall proceed by successive splitting and reduction. For each operator $T_i$, starting from $T_2$, we split the functions it is applied to into several terms using condition (A1). Most of the obtained terms can be easily dealt with, but for the remaining ''difficult'' terms, we move on to $T_{i+1}$, up to and including $T_m$.\\
We first prove part (1), and then use this result to complete the proof for part (2). 

In what follows, we shall assume without loss of generality that for the constant in Theorem \ref{thm:main}, we have $C\geq 1$.
Given a function $f\in E_{1;s}$, and an $\varepsilon\in(0,1)$, do the following.
\begin{enumerate}[(I)]
\item First, set $d=0$, $c:=\varepsilon C^{-m}$ and let $\mc{I}_0$ consist of the empty index.
\item By assumption (A1), for each $f_{v}$ ($v\in\mc{I}_d$) we may find a decomposition $E=\mc{U}_v\oplus \mc{R}_v$ with $\ell_v:=\dim\mc{U}_v<\infty$ and 
\[
P_{\mc{R}_v}\ms{A}_{d+1,f_{v}}\subset B_{c_v}(0,L^\infty(X,\mu)).
\]
For each $v\in\mc{I}_d$, choose a maximal linearly independent set $f_{v;1},\ldots,f_{v;\ell_v}$ in $\mc{U}_v$. We can then for each $n\in\mb{N}$ write the unique decomposition
\[
A_{d+1}T_{d+1}^{n}f_{v}=\lambda_{v,1;n}f_{v,1}+\ldots+\lambda_{v,\ell_v;n}f_{v,\ell_v}+r_{v;n},
\]
for appropriate coefficients $\lambda_{v,j;n}\in\mb{C}$ and some remainder term $r_{v,n}\in \mc{R}_v$ with $\|r_{v;n}\|_\infty<c_v$.
Choose further elements $\varphi_{v;1},\ldots, \varphi_{v;\ell_v}\in E'$ with the property
\[
\varphi_{v;i}(f_{v,j})=\delta_{i,j}\quad \mbox{and}\quad \varphi_{v;i}|_{\mc{R}_v}=0\quad  \mbox{ for every } i,j\in\{1,\ldots,\ell_v\}.
\]
Set
\[
u_v:=\|f_v\|\cdot\|A_{d+1}^*\|\max_{1\leq j\leq \ell_v} \|\varphi_{v;j}\|.
\]
\item
Let
\[
\mc{I}_{d+1}:=\left\{w\in\mb{N}^{d+1}|\overline{w}\in\mc{I}_d, 1\leq w^*\leq \ell_{\overline{w}}\right\}.
\]
Also, for each $w\in\mc{I}^{d+1}$, let $c_w:=c_{\overline w}/u_{\overline{w}}\ell_{\overline w}$.
\item Increase $d$ by 1, and unless $d=m-1$, start anew from step (II).
\item For each $w\in \mc{I}_{m-1}$, choose the function $\widetilde{f}_w\in L^\infty$ such that
\[
\|f_w-\widetilde{f}_w\|_1\leq\|f_w-\widetilde{f}_w\|_p<
c_w\cdot \varepsilon/|\mathcal{I}_{m-1}|.
\]
\end{enumerate}

\noindent\textbf{Proof of (1)}.\\
Applying the above splitting procedure to $f\in E_{1;s}$, we may bound our original Cesàro averages by a finite sum of averages. For a.e. $z\in X$ we have
\begin{align*}
&\frac{1}{N^k}\sum_{1\leq n_1,\ldots, n_k\leq N} \left|T_m^{n_{\alpha(m)}}A_{m-1}T^{n_{\alpha(m-1)}}_{m-1}\ldots A_2T_2^{n_{\alpha(2)}}A_1T_1^{n_{\alpha(1)}} f \right|(z)\\
\leq&
\sum_{v\in \mc{I}_{m-1}}
\frac{1}{N^k}\sum_{1\leq n_1,\ldots, n_k\leq N}
\left|
T_m^{n_{\alpha(m)}}f_v
\right|(z)
\prod_{l(x)>0,\, x\subseteq v} \left|\lambda_{x;n_{\alpha(l(x))}} \right|\\
&+\sum_{w\in\mc{I}_{l(w)},\,0\leq l(w)<m-1}
\frac{C^{m-2}}{N^k}\sum_{1\leq n_1,\ldots, n_k\leq N}
\left|r_{w;n_{\alpha(l(w)+1)}}(z) \right|
\prod_{l(x)>0,\,x\subseteq w} \left|\lambda_{x;n_{\alpha(l(x))}} \right|\\
\leq&
\sum_{v\in \mc{I}_{m-1}}
\frac{1}{N^k}\sum_{1\leq n_1,\ldots, n_k\leq N}
\left|
T_m^{n_{\alpha(m)}}\widetilde{f}_v
\right|(z)
\prod_{l(x)>0,\, x\subseteq v} \left|\lambda_{x;n_{\alpha(l(x))}} \right|\\
&+\sum_{v\in \mc{I}_{m-1}}
\frac{1}{N^k}\sum_{1\leq n_1,\ldots, n_k\leq N}
\left|T_m^{n_{\alpha(m)}}\left(
f_v-\widetilde{f}_v
\right)
\right|(z)
\prod_{l(x)>0,\, x\subseteq v} \left|\lambda_{x;n_{\alpha(l(x))}} \right|\\
&+\sum_{w\in\mc{I}_{l(w)},\,0\leq l(w)<m-1}
\frac{C^{m}}{N^k}\sum_{1\leq n_1,\ldots, n_k\leq N}
c_w
\prod_{l(x)>0,\,x\subseteq w} \left|\lambda_{x;n_{\alpha(l(x))}} \right|.
\end{align*}

We shall bound each of these three sums separately. Note that by the definition of the linear forms, we have for each $v\in\mc{I}_{l(v)}$ ($1\leq l(v)\leq m-1$)
\[
\lambda_{v;n}=\varphi_{\overline{v};v*}(A_{l(v)}T_{l(v)}^nf_{\overline{v}})=(A_{l(v)}^*\varphi_{\overline{v};v*})(T_{l(v)}^nf_{\overline{v}}),
\]
and hence
\[
\left|\lambda_{v;n}\right|\leq \|f_{\overline{v}}\|\cdot\|A_{l(v)}^*\|\max_{1\leq j\leq \ell_v} \|\varphi_{\overline{v};j}\|=
u_{\overline{v}},
\]
but also, since $f\in E_{1;s}$, we have $(\lambda_{j;n})_{n\in\mb{N}}\in\mc{N}$ for each $1\leq j\leq \ell$.

Using that $\mc{N}$ is closed under multiplication by bounded sequences, on the one hand we obtain that
\begin{align*}
&\lim_{N\to\infty} \sum_{w\in\mc{I}_{l(w)},\,0\leq l(w)<m-1}
\frac{C^{m}}{N^k}\sum_{1\leq n_1,\ldots, n_k\leq N}
c_w
\prod_{l(x)>0,\,x\subseteq w} \left|\lambda_{x;n_{\alpha(l(x))}} \right|\\
=&\sum_{w\in\mc{I}_{l(w)},\,0\leq l(w)<m-1}
C^{m}c_w
\left(
\lim_{N\to\infty} 
\frac{1}{N^k}\sum_{1\leq n_1,\ldots, n_k\leq N}
\prod_{l(x)>0,\,x\subseteq w} \left|\lambda_{x;n_{\alpha(l(x))}} \right|
\right)
\\
=&C^{m}\sum_{w\in\mc{I}_{0}} c_w=C^mc=\varepsilon.
\end{align*}
On the other hand, also using that $\widetilde{f}_v$ is essentially bounded for each $v\in\mc{I}_{m-1}$ and that $T_m$ as a Dunford-Schwartz operator is a contraction on $L^\infty$, we obtain that
\begin{align*}
\lim_{N\to\infty}\sum_{v\in \mc{I}_{m-1}}
\frac{1}{N^k}\sum_{1\leq n_1,\ldots, n_k\leq N}
\left|
T_m^{n_{\alpha(m)}}\widetilde{f}_v
\right|(z)
\prod_{l(x)>0,\, x\subseteq v} \left|\lambda_{x;n_{\alpha(l(x))}} \right|=0
\end{align*}
for almost every $z\in X$.

Thus only the middle sum remains to be bounded. To treat that term, we shall make use of the Pointwise Ergodic Theorem for Dunford-Schwartz operators. Since the modulus of a DS operator is itself DS, we may apply the PET to $|T_m|$ and the functions $|f_v-\widetilde{f}_v|$ to obtain that for each $v\in\mc{I}_{m-1}$, there exists a function $0\leq\mf{f}_v\in L^1$ with $\|\mf{f}_v\|_1\leq |f_v-\widetilde{f}_v|_1$ and a set $S_v$ with $\mu(S_v)=1$ such that
\[
\lim_{N\to\infty}\frac{1}{N} \sum_{n=1}^N |T_m|^{n} |(f_v-\widetilde{f}_v)|(z)=\mf{f}_v(z)
\]
for all $z\in S_v$.
Note that by the norm bound in step (V), there then exists a set $\mf{S}_v\subset S_v$ with $\mu(\mf{S}_v)>1-\varepsilon/|\mc{I}_{m-1}|$ such that 
\[
\lim_{N\to\infty}\frac{1}{N} \sum_{n=1}^N |T_m|^{n} |(f_v-\widetilde{f}_v)|(z)\leq c_w
\]
for all $z\in\mf{S}_v$.
We obtain that for every $z\in \bigcap_{v\in\mc{I}_{m-1}}\mf{S}_v$ we have
\begin{align*}
&\limsup_{N\to\infty}\sum_{v\in \mc{I}_{m-1}}
\frac{1}{N^k}\sum_{1\leq n_1,\ldots, n_k\leq N}
\left|T_m^{n_{\alpha(m)}}\left(
f_v-\widetilde{f}_v
\right)
\right|(z)
\prod_{l(x)>0,\, x\subseteq v} \left|\lambda_{x;n_{\alpha(l(x))}} \right|\\
\leq&
\lim_{N\to\infty}\sum_{v\in \mc{I}_{m-1}}
\frac{1}{N^k}\sum_{1\leq n_1,\ldots, n_k\leq N}
\left(\left|T_m\right|^{n_{\alpha(m)}}\left|
f_v-\widetilde{f}_v
\right|\right)(z)
\prod_{l(x)>0,\, x\subseteq v} u_{\overline{x}}\\
\leq&\sum_{v\in \mc{I}_{m-1}}
c_{v}
\prod_{l(x)>0,\, x\subseteq v} u_{\overline{x}}=\varepsilon C^{-m}\leq\varepsilon.
\end{align*}

In total, we obtain that for every $\varepsilon>0$, we have
\[
\limsup_{N\to\infty}\frac{1}{N^k}\sum_{1\leq n_1,\ldots, n_k\leq N} \left|T_m^{n_{\alpha(m)}}A_{m-1}T^{n_{\alpha(m-1)}}_{m-1}\ldots A_2T_2^{n_{\alpha(2)}}A_1T_1^{n_{\alpha(1)}} f \right|(z)<\varepsilon
\]
for all $z\in \bigcap_{v\in\mc{I}_{m-1}}\mf{S}_v$. Since $\mu(\bigcap_{v\in\mc{I}_{m-1}}\mf{S}_v)>1-|\mc{I}_{m-1}|\cdot\varepsilon/|\mc{I}_{m-1}|=1-\varepsilon$, letting $\varepsilon\to0$ concludes our proof of Part (1).

We now turn our attention to Part (2), and show a.e. convergence of the averages also on the reversible part $E_{1;r}$ with respect to the operator $T_1$. Again we shall proceed by iterated splitting of the function, but part (1) will also be made use of.

Given a function $f\in E_{1;r}$, and an $\varepsilon>0$, do the following.
\begin{enumerate}[(i)]
\item First, set $d=0$, $c:=\varepsilon C^{-m}$ and let $\mc{I}_0$ consist of the empty index.
\item By assumption (A1), for each $f_{v}$ ($v\in\mc{I}_d$) we may find a decomposition $E=\mc{U}_v\oplus \mc{R}_v$ with $\ell_v:=\dim\mc{U}_v<\infty$ and 
\[
P_{\mc{R}_v}\ms{A}_{d+1,f_{v}}\subset B_{c_v}(0,L^\infty(X,\mu)).
\]
For each $v\in\mc{I}_d$, choose a maximal linearly independent set $g_{v,1},\ldots,g_{v,\ell_v}$ in $\mc{U}_v$. We can then for each $n\in\mb{N}$ write the unique decomposition
\[
A_{d+1}T_{d+1}^{n}f_{v}=\lambda_{v,1;n}g_{v,1}+\ldots+\lambda_{v,\ell_v;n}g_{v,\ell_v}+r_{v;n},
\]
for appropriate coefficients $\lambda_{v,j;n}\in\mb{C}$ and some remainder term $r_{v,n}\in \mc{R}_v$ with $\|r_{v;n}\|_\infty<c_v$.
Choose further elements $\varphi_{v;1},\ldots, \varphi_{v;\ell_v}\in E'$ with the property
\[
\varphi_{v;i}(g_{v,j})=\delta_{i,j}\quad \mbox{and}\quad \varphi_{v;i}|_{\mc{R}_v}=0\quad  \mbox{ for every } i,j\in\{1,\ldots,\ell_v\}.
\]
Set
\[
u_v:=\|f_v\|\cdot\|A_{d+1}^*\|\max_{1\leq j\leq \ell_v} \|\varphi_{v;j}\|.
\]
\item For each $v\in\mc{I}_d$ and $1\leq j\leq\ell_v$, let $f_{v,j}:=P_{E_{d+2;r}}g_{v,j}$ be the reversible part of $g_{v,j}$ with respect to $T_{d+2}$, and let $q_{v,j}:=g_{v,j}-f_{v,j}$ be its stable part.
\item
Let
\[
\mc{I}_{d+1}:=\left\{w\in\mb{N}^{d+1}|\overline{w}\in\mc{I}_d, 1\leq w^*\leq \ell_{\overline{w}}\right\}.
\]
Also, for each $w\in\mc{I}^{d+1}$, let $c_w:=c_{\overline w}/u_{\overline{w}}\ell_{\overline w}$.
\item Increase $d$ by 1, and unless $d=m-1$, start anew from step (II).
\end{enumerate}

\noindent\textbf{Proof of (2)}

Let us apply the iterated decomposition (i)--(vi) detailed above to the function $f\in E_{1;r}$. We obtain that
\begin{align*}
&\frac{1}{N^k}\sum_{1\leq n_1,\ldots, n_k\leq N} T_m^{n_{\alpha(m)}}A_{m-1}T^{n_{\alpha(m-1)}}_{m-1}\ldots A_2T_2^{n_{\alpha(2)}}A_1T_1^{n_{\alpha(1)}} f\\
=&
\sum_{v\in \mc{I}_{m-1}}
\frac{1}{N^k}\sum_{1\leq n_1,\ldots, n_k\leq N}
\left(\prod_{l(x)>0,\, x\subseteq v} \lambda_{x;n_{\alpha(l(x))}}\right)
T_m^{n_{\alpha(m)}}g_v
\\
+&\sum_{w\in\mc{I}_{l(w)},\,0< l(w)<m-1}
\frac{1}{N^k}\sum_{1\leq n_1,\ldots, n_k\leq N}\\
& T_m^{n_{\alpha(m)}}A_{m-1}T^{n_{\alpha(m-1)}}_{m-1}\ldots 
 A_{\ell(w)+1}T_{\ell(w)+1}^{n_{\alpha(\ell(w)+1)}}
 q_{w} \prod_{l(x)>0,\,x\subseteq w} \lambda_{x;n_{\alpha(l(x))}}
\\
+&\sum_{w\in\mc{I}_{l(w)},\,0\leq l(w)<m-1}
\frac{1}{N^k}\sum_{1\leq n_1,\ldots, n_k\leq N}\\
& T_m^{n_{\alpha(m)}}A_{m-1}T^{n_{\alpha(m-1)}}_{m-1}\ldots 
 A_{\ell(w)+2}T_{\ell(w)+2}^{n_{\alpha(\ell(w)+2)}}
 r_{w;n_{\alpha(l(w)+1)}} \prod_{l(x)>0,\,x\subseteq w} \lambda_{x;n_{\alpha(l(x))}}
.
\end{align*}

First, let us look at the terms involving the $q_w$-s. For each $w\in \mc{I}_{l(w)}$ with $0<l(w)<m-1$, we note that the products $\prod_{l(x)>0,\,x\subseteq w} \lambda_{x;n_{\alpha(l(x))}}$ are bounded in absolute value by the constant $\prod_{l(x)>0,\,x\subseteq w} u_{\overline{x}}$, and using part (1) with the new value $m':=m-l(w)>1$, we obtain for each $w$ that
\begin{align*}
&\limsup_{N\to\infty} \left|
\frac{1}{N^k}\sum_{1\leq n_1,\ldots, n_k\leq N}T_m^{n_{\alpha(m)}}A_{m-1}T^{n_{\alpha(m-1)}}_{m-1}\ldots 
 A_{\ell(w)+1}T_{\ell(w)+1}^{n_{\alpha(\ell(w)+1)}}
 q_{w} \prod_{l(x)>0,\,x\subseteq w} \lambda_{x;n_{\alpha(l(x))}}
\right|(z)\\
\leq&\left(\prod_{l(x)>0,\,x\subseteq w} u_{\overline{x}}\right)\lim_{N\to\infty} \frac{1}{N^k}\sum_{1\leq n_1,\ldots, n_k\leq N}\left|
T_m^{n_{\alpha(m)}}A_{m-1}T^{n_{\alpha(m-1)}}_{m-1}\ldots 
 A_{\ell(w)+1}T_{\ell(w)+1}^{n_{\alpha(\ell(w)+1)}}
 q_{w}\right| (z)=0
\end{align*}
for almost every $z\in X$. Since there are finitely many different $q_w$ terms, they contribute a total of 0 to the Cesàro means on a set of full measure.

Second, let us look at the terms involving the $r_{w;*}$-s. Note that since we work on the reversible part and lack a coefficient sequence $\lambda_*$ in $\mc{N}$, we cannot conclude the same way as in part (1). Let us therefore fix $w\in\mc{I}_{l(w)}$ with $0\leq l(w)<m-1$. We then have using (A2) that
\begin{align*}
&\left\| T_m^{n_{\alpha(m)}}A_{m-1}T^{n_{\alpha(m-1)}}_{m-1}\ldots 
 A_{\ell(w)+2}T_{\ell(w)+2}^{n_{\alpha(\ell(w)+2)}}
 r_{w;n_{\alpha(l(w)+1)}} \prod_{l(x)>0,\,x\subseteq w} \lambda_{x;n_{\alpha(l(x))}}\right\|_\infty\\
 \leq&C^{m-l(w)-2}\left\|r_{w;n_{\alpha(l(w)+1)}}\right\|\prod_{l(x)>0,\,x\subseteq w} u_{\overline{x}}<C^mc_w\prod_{l(x)>0,\,x\subseteq w} u_{\overline{x}}=\varepsilon\prod_{x\subset w} \frac{1}{\ell_{x}}.
\end{align*}
This in turn implies that for every $N$
\begin{align*}
&\left\|
\sum_{w\in\mc{I}_{l(w)},\,0\leq l(w)<m-1}
\frac{1}{N^k}\sum_{1\leq n_1,\ldots, n_k\leq N}
\right.\\
& \left.T_m^{n_{\alpha(m)}}A_{m-1}T^{n_{\alpha(m-1)}}_{m-1}\ldots 
 A_{\ell(w)+2}T_{\ell(w)+2}^{n_{\alpha(\ell(w)+2)}}
 r_{w;n_{\alpha(l(w)+1)}} \prod_{l(x)>0,\,x\subseteq w} \lambda_{x;n_{\alpha(l(x))}}
\right\|_\infty\\
\leq&
\sum_{w\in\mc{I}_{l(w)},\,0\leq l(w)<m-1}
\frac{1}{N^k}\sum_{1\leq n_1,\ldots, n_k\leq N}\\
&\left\| T_m^{n_{\alpha(m)}}A_{m-1}T^{n_{\alpha(m-1)}}_{m-1}\ldots 
 A_{\ell(w)+2}T_{\ell(w)+2}^{n_{\alpha(\ell(w)+2)}}
 r_{w;n_{\alpha(l(w)+1)}} \prod_{l(x)>0,\,x\subseteq w} \lambda_{x;n_{\alpha(l(x))}}
\right\|_\infty\\
<&\sum_{w\in\mc{I}_{l(w)},\,0\leq l(w)<m-1}\varepsilon\prod_{x\subset w} \frac{1}{\ell_{x}}=
\varepsilon\sum_{d=1}^{m-2}\sum_{w\in\mc{I}_d}\prod_{x\subset w} \frac{1}{\ell_{x}}=\varepsilon\sum_{d=1}^{m-2}1=\varepsilon(m-2).
\end{align*}

It only remains to estimate the terms involving the functions $g_v$ ($v\in\mc{I}_{m-1}$). We have
\begin{align*}
&\frac{1}{N^k}\sum_{1\leq n_1,\ldots, n_k\leq N}
\left(\prod_{l(x)>0,\, x\subseteq v} \lambda_{x;n_{\alpha(l(x))}}\right)
T_m^{n_{\alpha(m)}}g_v\\
=&\left(\frac{1}{N^{k-1}}
\sum_{1\leq n_j\leq N \,(1\leq j\leq k,\, j\neq\alpha(m))}
\left(\prod_{l(x)>0,\,\alpha(l(x))\neq\alpha(m),\, x\subseteq v} \lambda_{x;n_{\alpha(l(x))}}\right)
\right)\\
&\cdot\left(
\frac{1}{N}\sum_{n=1}^N 
\left(\prod_{l(x)>0,\,\alpha(l(x))=\alpha(m),\, x\subseteq v} \lambda_{x;n}\right)T_m^n g_v
\right).
\end{align*}

We shall show that as $N$ tends to infinity, the first, complex valued factor is convergent, whereas the second, function valued factor converges almost everywhere. This will then imply that the product also converges almost everywhere.

Let us fix $v\in\mc{I}_{m-1}$.
We obtain for each $x\subseteq v$ with $l(x)>0$ that
\begin{align*}
&\lambda_{x;n}=\varphi_{\overline{x};x*}\left(A_{l(x)}T_{l(x)}^nf_{\overline{x}}\right)=\langle A_{l(x)}^*\varphi_{\overline{x};x*},T_{l(x)}^nf_{\overline{x}}\rangle
\end{align*}
and since $f_{\overline{x}}$ is in the reversible part of $E$ with respect to $T_{l(x)}$, the sequence $(\lambda_{x;n})_{n\in\mb{N}}$ is a reversible linear sequence.
Using that $\ms{P}$ is closed under multiplication, we have that for each $1\leq j\leq m$
\[
\left(\prod_{l(x)>0,\,\alpha(l(x))=j,\, x\subseteq v} \lambda_{x;n}\right)_{n\in\mb{N}}\in \ms{P}.
\]
In particular, for each $v\in\mc{I}_ {m-1}$, the Cesàro means
\[
\left(\frac{1}{N^{k-1}}
\sum_{1\leq n_j\leq N \,(1\leq j\leq k,\, j\neq\alpha(m))}
\left(\prod_{l(x)>0,\,\alpha(l(x))\neq\alpha(m),\, x\subseteq v} \lambda_{x;n_{\alpha(l(x))}}\right)
\right)
\]
converge.

Finally, let us turn our attention to the factor
\[
\frac{1}{N}\sum_{n=1}^N 
\left(\prod_{l(x)>0,\,\alpha(l(x))=\alpha(m),\, x\subseteq v} \lambda_{x;n}\right)T_m^n g_v.
\]
Since elements of $\ms{P}$ are good weights for the PET for Dunford-Schwartz operators, this converges poinwise almost everywhere.

In conclusion, for almost every $z\in X$ we have
\begin{align*}
&(\limsup_{N\to\infty}-\liminf_{N\to\infty})\frac{1}{N^k}\sum_{1\leq n_1,\ldots, n_k\leq N}
\left(T_m^{n_{\alpha(m)}}A_{m-1}T^{n_{\alpha(m-1)}}_{m-1}\ldots A_2T_2^{n_{\alpha(2)}}A_1T_1^{n_{\alpha(1)}} f\right)(z)\\
\leq&
\sum_{v\in \mc{I}_{m-1}}
(\limsup_{N\to\infty}-\liminf_{N\to\infty})\frac{1}{N^k}\sum_{1\leq n_1,\ldots, n_k\leq N}
\left(\prod_{l(x)>0,\, x\subseteq v} \lambda_{x;n_{\alpha(l(x))}}\right)
\left(T_m^{n_{\alpha(m)}}g_v\right)(z)
\\
+&\sum_{w\in\mc{I}_{l(w)},\,0< l(w)<m-1}
(\limsup_{N\to\infty}-\liminf_{N\to\infty})\frac{1}{N^k}\sum_{1\leq n_1,\ldots, n_k\leq N}\\
& \left(T_m^{n_{\alpha(m)}}A_{m-1}T^{n_{\alpha(m-1)}}_{m-1}\ldots 
 A_{\ell(w)+1}T_{\ell(w)+1}^{n_{\alpha(\ell(w)+1)}}
 q_{w}\right)(z) \prod_{l(x)>0,\,x\subseteq w} \lambda_{x;n_{\alpha(l(x))}}
\\
+&\sum_{w\in\mc{I}_{l(w)},\,0\leq l(w)<m-1}
(\limsup_{N\to\infty}-\liminf_{N\to\infty})\frac{1}{N^k}\sum_{1\leq n_1,\ldots, n_k\leq N}\\
& \left(T_m^{n_{\alpha(m)}}A_{m-1}T^{n_{\alpha(m-1)}}_{m-1}\ldots 
 A_{\ell(w)+2}T_{\ell(w)+2}^{n_{\alpha(\ell(w)+2)}}
 r_{w;n_{\alpha(l(w)+1)}}\right)(z) \prod_{l(x)>0,\,x\subseteq w} \lambda_{x;n_{\alpha(l(x))}}\\
 =&0+0+\sum_{w\in\mc{I}_{l(w)},\,0\leq l(w)<m-1}
(\limsup_{N\to\infty}-\liminf_{N\to\infty})\frac{1}{N^k}\sum_{1\leq n_1,\ldots, n_k\leq N}\\
& \left(T_m^{n_{\alpha(m)}}A_{m-1}T^{n_{\alpha(m-1)}}_{m-1}\ldots 
 A_{\ell(w)+2}T_{\ell(w)+2}^{n_{\alpha(\ell(w)+2)}}
 r_{w;n_{\alpha(l(w)+1)}}\right)(z) \prod_{l(x)>0,\,x\subseteq w} \lambda_{x;n_{\alpha(l(x))}}\\
\leq&
2\sup_{N\in\mb{N}}\left\|
\sum_{w\in\mc{I}_{l(w)},\,0\leq l(w)<m-1}
\frac{1}{N^k}\sum_{1\leq n_1,\ldots, n_k\leq N}
\right.\\
& \left.T_m^{n_{\alpha(m)}}A_{m-1}T^{n_{\alpha(m-1)}}_{m-1}\ldots 
 A_{\ell(w)+2}T_{\ell(w)+2}^{n_{\alpha(\ell(w)+2)}}
 r_{w;n_{\alpha(l(w)+1)}} \prod_{l(x)>0,\,x\subseteq w} \lambda_{x;n_{\alpha(l(x))}}
\right\|_\infty \\
\leq& 2\varepsilon(m-2).
\end{align*}

Since this holds for every $\varepsilon>0$, this concludes the proof of part (2).

\begin{Rem}
The pointwise limit is -- if it exists -- clearly the same as the stong limit, and takes the form given in \cite[Thm. 3]{EKK}.
\end{Rem}

%
%
\section{Pointwise polynomial ergodic version}

In this section our goal is to prove a polynomial version of Theorem \ref{thm:main}.

With these tools in hand, we can now state and prove almost everywhere pointwise convergence of entangled means on Hilbert spaces.

\begin{Thm}\label{thm:main_poly}
Let $m>1$ and $k$ be positive integers, $\alpha:\left\{1,\ldots,m\right\}\to\left\{1,\ldots,k\right\}$ a not necessarily surjective map, and $T_1,T_2,\ldots, T_m$ Dunford-Schwartz operators on a standard probability space $(X,\mu)$.
Let $E:=L^2(X,\mu)$ and let $E=E_{j,r}\oplus E_{j,s}$ be the Jacobs-Glicksberg-deLeeuw decomposition corresponding to $T_j$ $(1\leq j\leq m)$. Let further $A_j\in\mc{L}(E)$ $(1\leq j< m)$ be bounded operators. Suppose that the conditions (A1) and (A2) of Theorem \ref{thm:main} hold.\\
Further, let $\mf{q}_1,\mf{q}_2,\ldots, \mf{q}_k$ be non-constant polynomials with integer coefficients taking positive values on $\mb{N}$.
Then we have the following:
\begin{enumerate}
\item for each $f\in E_{1,s}$, 
\[
\frac{1}{N^k}\sum_{1\leq n_1,\ldots, n_k\leq N} \left|T_m^{\mf{q}_{\alpha(m)}(n_{\alpha(m)})}
\ldots A_2T_2^{\mf{q}_{\alpha(2)}(n_{\alpha(2)})}A_1T_1^{\mf{q}_{\alpha(1)}(n_{\alpha(1)})} f \right|\rightarrow 0
\]
pointwise a.e.;
\item 
for each $f\in E_{1,r}$, the averages 
\[
\frac{1}{N^k}\sum_{1\leq n_1,\ldots, n_k\leq N} T_m^{\mf{q}_{\alpha(m)}(n_{\alpha(m)})}
\ldots A_2T_2^{\mf{q}_{\alpha(2)}(n_{\alpha(2)})}A_1T_1^{\mf{q}_{\alpha(1)}(n_{\alpha(1)})} f
\]
converge pointwise almost everywhere.
\end{enumerate}
\end{Thm}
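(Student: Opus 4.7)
My plan is to carry the proof of Theorem \ref{thm:main} over essentially verbatim, with three drop-in replacements for the ergodic-theoretic ingredients at the points where the summation indices enter the argument. The iterated splittings (I)--(V) and (i)--(v) used previously depend only on (A1), (A2), and the Jacobs-Glicksberg-deLeeuw decomposition with respect to each $T_j$; none of these structural inputs is affected by replacing $n_i$ with $\mf{q}_{\alpha(i)}(n_i)$. I would therefore carry out the same decompositions and bookkeeping, obtaining the same three-way splitting of the entangled polynomial average into a main term indexed by $\mc{I}_{m-1}$ together with $q_w$-type and $r_{w;\ast}$-type remainder contributions. The norm estimates on the $r_{w;\ast}$-terms use only (A2) and the construction of the splittings, so they carry through unchanged and once again yield a uniform $L^\infty$-bound of order $\varepsilon(m-2)$.

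Three substitutions then port the ergodic inputs to the polynomial setting. \emph{First}, in part (1), the fact that $(\lambda_{v;n})_{n\in\mb{N}}\in\mc{N}$ for $l(v)=1$ and $f\in E_{1,s}$ is replaced by the statement that $(\lambda_{v;\mf{q}_{\alpha(1)}(n)})_{n\in\mb{N}}\in\mc{N}$; since $\lambda_{v;n}=\langle A_1 T_1^n f,\psi_{v^\ast}\rangle$ for suitable $\psi_{v^\ast}\in L^2$, this is precisely Corollary \ref{cor:awps}. \emph{Second}, the pointwise ergodic theorem for $|T_m|$ applied to $|f_v-\widetilde{f}_v|$ (the "middle sum" in part (1)) is replaced by Theorem \ref{Thm:Besicovitch} with constant weight $b_n\equiv 1\in B_{1,(\mf{q}_{\alpha(m)}(n))}\cap\ell^\infty$ and polynomial $\mf{q}=\mf{q}_{\alpha(m)}$, giving a.e.\ convergence of $\frac{1}{N}\sum_{n=1}^N|T_m|^{\mf{q}_{\alpha(m)}(n)}|f_v-\widetilde{f}_v|$. \emph{Third}, for part (2), the good-weight property of $\ms{P}$ for the Dunford-Schwartz PET (which handled the $g_v$-terms) is replaced as follows: by Lemma \ref{Le:Besicovitch} the product $\prod_{x\subseteq v,\,\alpha(l(x))=\alpha(m)}\lambda_{x;n}$ of reversible linear sequences lies in $B_{1,(\mf{q}_{\alpha(m)}(n))}\cap\ell^\infty$, and Theorem \ref{Thm:Besicovitch} then supplies a.e.\ convergence of the resulting weighted polynomial ergodic average for $T_m$ acting on $g_v$.

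For the complementary complex-valued factor obtained by isolating the variables $n_j$ with $j\neq\alpha(m)$, the product separates across those indices, so convergence reduces to showing that $\frac{1}{N}\sum_{n=1}^N h_j(\mf{q}_j(n))$ converges for each such $j$, where $h_j(n):=\prod_{x\subseteq v,\,\alpha(l(x))=j}\lambda_{x;n}\in\ms{P}$. Since any element of $\ms{P}$ is a uniform limit of trigonometric polynomials and $\frac{1}{N}\sum_{n=1}^N e^{2\pi i\theta\mf{q}_j(n)}$ converges by Weyl equidistribution, this is automatic. The $q_w$-terms vanish a.e.\ by applying part (1) of the present theorem, exactly as in the original argument. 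Letting $\varepsilon\to 0$ closes both parts. The genuine content of the polynomial extension is packaged into Corollary \ref{cor:awps} and Theorem \ref{Thm:Besicovitch}, so the main "obstacle" here is really the conceptual one of recognising that the three ergodic inputs of the $n_i$-proof admit precisely these three polynomial substitutes; once this is seen, the generalization is a drop-in replacement.
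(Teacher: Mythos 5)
Your proposal is correct and follows essentially the same route as the paper: the same recursive splitting is reused, and the three ergodic inputs are replaced exactly as the paper does — Corollary \ref{cor:awps} for membership of $(\lambda_{v;\mf{q}(n)})_n$ in $\mc{N}$, Theorem \ref{Thm:Besicovitch} (with trivial weight) for the polynomial PET applied to $|T_m|$ and $|f_v-\widetilde{f}_v|$, and Lemma \ref{Le:Besicovitch} combined with Theorem \ref{Thm:Besicovitch} for the $g_v$-terms. Your explicit Weyl-equidistribution argument for the scalar factor is a detail the paper leaves implicit, but it does not change the approach.
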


\begin{proof}
We shall follow the proof of Theorem \ref{thm:main}, using the same recursive splitting. The question is then why the convergences still hold when averaging along polynomial subsequences.\\
For part (1), we have three terms to bound: those involving the remainder functions $r_{*;n}$, the ones involving the essentially bounded functions $\widetilde{f}_*$, and finally the ones with the small approximation errors $f_*-\widetilde{f}_*$. Using Corollary \ref{cor:awps}, we obtain that the subsequences $\lambda_{j;q(n)}$ involved ($1\leq j\leq\ell$) also lie in $\mc{N}$, leading to the same bounds as in the linear case for the first two types of terms. For the terms involving the functions $f_*-\widetilde{f}_*$, we use the polynomial version of PET for Dunford-Schwartz operators, Theorem \ref{Thm:Besicovitch}, to obtain that
for each $v\in\mc{I}_{m-1}$, there exists a function $0\leq\mf{f}_v\in L^1$ and a set $S_v$ with $\mu(S_v)=1$ such that
\[
\lim_{N\to\infty}\frac{1}{N} \sum_{n=1}^N |T_m|^{\mf{q}(n)} |(f_v-\widetilde{f}_v)|(z)=\mf{f}_{v}(z).
\]
for all $z\in S_v$. Since the polynomial Cesàro means are also contractive in $L^1$ for Dunford-Schwartz operators, the rest of the arguments remain unchanged, and this concludes the proof of part (1).

For part (2), we again have three types of terms. The terms involving the functions $q_*$ can again be treated using part (1) and shown to have a zero contribution almost everywhere, and the terms with the $r_{*;n}$-s also do not require any change in the arguments used. Only the terms involving the functions $g_v$ ($v\in\mc{I}_{m-1}$) remain. For these, we use Lemma \ref{Le:Besicovitch} combined with Theorem \ref{Thm:Besicovitch} to obtain the almost everywhere convergence needed. 

\end{proof}

%
%
%
%

\section{The continuous case}\label{sec:ex}

In this section, we finally turn our attention to a variant of the above results, where we replace the discrete action of the Dunford-Schwartz operators with the continuous action $C_0$-semigroups. In other words, the semigroups $\{T_i^n|n\in\mb{N}^+\}$ are replaced by strongly continuous semigroups $\{T_i(t)|t\in[0,\infty)\}$.\\

Let $T(\cdot):=(T(t))_{t\in [0,\infty)}$ be a $C_0$-semigroup of Dunford-Schwartz operators on $L^1(X,\mu)$. Then, by the standard approximation argument, using that the unit ball in $L^\infty(X,\mu)$ is invariant under the semigroup, $T(\cdot)$ is  automatically  a $C_0$-semigroup  (of contractions) on $L^p(X,\mu)$ for every $\infty>p\geq 1$. In addition, by Fubini's theo\-rem, see, e.g., Sato \cite[p.~3]{S}, for every $f\in L^1(X,\mu)$ the function $(T(\cdot)f)(x)$ is Lebesgue integrable over finite intervals in $[0,\infty)$ for almost every $x\in X$. Similarly, for $C_0$-semigroups $T_0(\cdot),\ldots,T_a(\cdot)$ on $E:=L^p(X,\mu)$, operators $A_0,\ldots,A_{a-1}\in\mc{L}(E)$ and $f\in E$, the product 
$$
(T_a(\cdot)A_{a-1}T_{a-1}(\cdot)\ldots A_1T_1(\cdot)A_0T_0(\cdot) f)(x)
$$
is  Lebesgue integrable over finite intervals in $[0,\infty)$ for almost every $x\in X$.

By Dunford, Schwartz \cite[pp. 694, 708]{DS-book}, the pointwise ergodic theorem extends to every strongly measurable semigroup $T(\cdot)$ of Dunford-Schwartz operators.
In addition, it can be shown through a simple adaptation of the arguments in Lin, Olsen, Tempelman \cite[Proof of Prop. 2.6]{LOT} that every $C_0$-semigroup of Dunford-Schwartz operators has relatively weakly compact orbits in $L^1(X,\mu)$. Thus, the continuous version of the Jacobs-deLeeuw-Glicksberg decomposition (see e.g.~\cite[Theorem III.5.7]{eisner-book}) is valid for such semigroups.

In the discrete case, the modulus $|T|$ of the operator $T$ was used to obtain a discrete semigroup of positive operators that dominates $(T^n)_{n\in\mb{N}}$ whilst keeping the Dunford-Schwartz property. The time-continuous case turns out to be more involved, as there is no ``first'' operator whose modulus can be used to generate the dominating semigroup. Just as in the discrete case, we usually have $|T^2|\neq|T|^2$, in the $C_0$ setting $(|T(t)|)_{t\geq 0}$ will generally not be a strongly continuous semigroup. By e.g.~Kipnis \cite{Ki} or Kubokawa \cite{Ku}, for a  $C_0$-semigroup $T(\cdot)$ of contractions there exists a minimal $C_0$-semigroup of positive operators dominating $T(\cdot)$, which we shall denote by $|T|(\cdot)$. Of course, $|T|(\cdot)=T(\cdot)$ for positive semigroups.  Moreover, the construction in \cite[pp. 372-3]{Ki} implies that if $T(\cdot)$ consists of Dunford-Schwartz operators then so does $|T|(\cdot)$.

With the above, the proof of Theorem \ref{thm:main} can be extended to the time-continuous setting to obtain the following $C_0$ version of our main theorem.

\begin{Thm}\label{thm:main-cont}
Let $m>1$ and $k$ be positive integers, $\alpha:\left\{1,\ldots,m\right\}\to\left\{1,\ldots,k\right\}$ a not necessarily surjective map and let 
 $(T_1(t))_{t\geq 0}$,$\ldots $,$(T_m(t))_{t\geq 0}$ be $C_0$-semigroups of Dunford-Schwartz operators on a standard probability space $(X,\mu)$.
Let $p\in[1,\infty)$, $E:=L^p(X,\mu)$ and let $E=E_{j,r}\oplus E_{j,s}$ be the Jacobs-Glicksberg-deLeeuw decomposition corresponding to $T_j(\cdot)$ $(1\leq j\leq m)$. Let further $A_j\in\mc{L}(E)$ $(1\leq j< m-1)$ be bounded operators. For a function $f\in E$ and an index $1\leq j\leq m-1$, write $\ms{A}_{j,f}:=\left\{A_jT_j(t)f\left|\right.t\in[0,\infty)\right\}$. Suppose that the following conditions hold:
\begin{itemize}
\item[(A1c)](Twisted compactness)
For any function $f\in E$, index $1\leq j\leq m-1$ and $\varepsilon>0$, there exists a decomposition $E=\mc{U}\oplus \mc{R}$ with $\dim \mc{U}<\infty$
such that
\[
P_{\mc{R}}\ms{A}_{j,f}
\subset B_\varepsilon(0,L^\infty(X,\mu)),
\]
with $P_{\mc{R}}$ denoting the projection onto $\mc{R}$ along $\mc{U}$.
\item[(A2c)](Joint $\mc{L}^\infty$-boundedness)
There exists a constant $C>0$ such that we have
\[\{A_jT_j(t)|\,t\in[0,\infty),1\leq j\leq m-1\}\subset B_C(0,\mc{L}(L^\infty(X,\mu)).
\]
\end{itemize}
Then we have the following:
\begin{enumerate}
\item for each $f\in E_{1,s}$, 
\[
\lim_{\mc{T}\to\infty}\frac{1}{\mc{T}^k}\int_{\left\{t_1,\ldots, t_k\right\}\in [0,\mc{T}]^k} \left|T_m(t_{\alpha(m)})
\ldots A_2T_2(t_{\alpha(2)})A_1T_1(t_{\alpha(1)}) f \right|\rightarrow 0
\]
pointwise a.e.;
\item
for each $f\in E_{1,r}$, 
\[
\frac{1}{\mc{T}^k}\int_{\left\{ t_1,\ldots, t_k\right\}\in [0,\mc{T}]^k} T_m(t_{\alpha(m)})A_{m-1}T_{m-1}(t_{\alpha(m-1)})\ldots A_2T_2(t_{\alpha(2)})A_1T_1(t_{\alpha(1)}) f
\]
converges pointwise a.e..
\end{enumerate}
\end{Thm}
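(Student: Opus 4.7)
The plan is to mirror the proof of Theorem \ref{thm:main} step by step, replacing discrete sums by integrals over $[0,\mc{T}]^k$, orbits $\{T_j^n\}_{n\in\mb{N}}$ by continuous orbits $\{T_j(t)\}_{t\geq 0}$, and the modulus $|T_m|$ by the minimal dominating positive $C_0$-semigroup $|T_m|(\cdot)$ supplied by Kipnis and Kubokawa. The iterated splitting procedures carry over verbatim once we reformulate the basic decomposition: by (A1c), for each $v\in\mc{I}_d$ we can write
\[
A_{d+1}T_{d+1}(t)f_v = \sum_{j=1}^{\ell_v}\lambda_{v,j}(t)\,g_{v,j}+r_v(t),
\]
with $\|r_v(t)\|_\infty<c_v$ uniformly in $t\geq 0$. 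The coefficient functions $\lambda_{v,j}(t)=\varphi_{\overline v;v^*}(A_{l(v)}T_{l(v)}(t)f_{\overline v})$ are bounded and continuous in $t$; when $f_{\overline v}$ lies in the stable part of $T_{l(v)}(\cdot)$ they satisfy $\frac{1}{\mc{T}}\int_0^{\mc{T}}|\lambda_{v,j}(t)|\,dt\to 0$ (the continuous analog of $\mc{N}$), and when $f_{\overline v}$ lies in the reversible part they are Bohr almost periodic, i.e.\ uniform limits of trigonometric polynomials $\sum_j b_j\,e^{i\rho_j t}$ with $\rho_j\in\mb{R}$.

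For part (1) we follow the three-term decomposition of the discrete proof. Closure of the continuous $\mc{N}$-class under bounded multiplication handles the $r_{w;*}$-contribution via the same telescoping product identity that collapses to $C^m c=\varepsilon$. The $\widetilde f_v$-contribution vanishes almost everywhere by the $C_0$-contraction property of $T_m(\cdot)$ on $L^\infty$ combined with the $\mc{N}$-decay of the coefficient product. The $(f_v-\widetilde f_v)$-contribution is controlled by applying the continuous pointwise ergodic theorem of Dunford-Schwartz to the dominating semigroup $|T_m|(\cdot)$ (whose DS character is guaranteed by the Kipnis construction) and using the pointwise bound $|T_m(t)h|\leq |T_m|(t)|h|$ valid a.e.; Fubini together with the joint measurability reviewed in the preamble ensures that all composed orbits are integrable over $[0,\mc{T}]^k$ on a full-measure set of $x$'s.

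For part (2), the contribution of the $q_w$-terms vanishes pointwise a.e.\ by an application of part (1) with $m$ replaced by $m-l(w)$, and the $r_{w;*}$-terms are uniformly bounded in $\mc{T}$ by $\varepsilon(m-2)$ thanks to (A2c). The remaining $g_v$-terms split into a $(k-1)$-dimensional time average of products of reversible coefficient functions times a one-dimensional weighted ergodic average
\[
\frac{1}{\mc{T}}\int_0^{\mc{T}}\Bigl(\prod_{l(x)>0,\,\alpha(l(x))=\alpha(m),\,x\subseteq v}\lambda_x(t)\Bigr) T_m(t)g_v\,dt.
\]
The first factor converges by multiplicativity of continuous Bohr almost periodic functions and standard multi-parameter continuous Cesàro convergence. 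The second converges pointwise almost everywhere because continuous Bohr almost periodic weights are good weights for the continuous pointwise ergodic theorem for DS semigroups: for a single exponential $e^{i\rho t}$ the claim reduces to the continuous Dunford-Schwartz pointwise ergodic theorem applied to the modulated semigroup $(e^{i\rho t}T_m(t))_{t\geq 0}$, which is itself $C_0$ and DS, and the general case follows by uniform approximation.

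The main obstacle is setting up the continuous-time dictionary cleanly: proving closure of the continuous $\mc{N}$-class under bounded multiplication, establishing the good-weight property for continuous Bohr almost periodic functions, and carefully handling the joint measurability needed to iterate Fubini along the chain $T_m(t_{\alpha(m)})A_{m-1}\cdots A_1T_1(t_{\alpha(1)})f$. Once this framework is in place, the index bookkeeping, the iterated splitting and the $\varepsilon$-accounting of Theorem \ref{thm:main} transfer without modification.
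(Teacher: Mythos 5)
Your proposal is correct and follows essentially the same route as the paper: the paper gives no detailed argument for Theorem \ref{thm:main-cont}, stating only that the proof of Theorem \ref{thm:main} extends to the $C_0$ setting using exactly the tools assembled in the section's preamble (Fubini-based integrability of the composed orbits, the continuous pointwise ergodic theorem of Dunford--Schwartz, relative weak compactness of orbits and the continuous Jacobs--deLeeuw--Glicksberg decomposition, and the Kipnis--Kubokawa minimal dominating positive $C_0$-semigroup $|T|(\cdot)$ in place of the modulus). Your sketch is a faithful elaboration of that plan with the same ingredients, and the points you flag as needing care (the continuous $\mc{N}$-class, the good-weight property of continuous Bohr almost periodic functions via modulated DS semigroups and uniform approximation against the dominating semigroup, and the measurability bookkeeping) are exactly the dictionary entries the paper's preamble is designed to supply.
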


\end{document}